\newcommand{\R}{\mathbb R}
\newcommand{\Z}{\mathbb Z}
\newtheorem{thm}{Theorem}[section]
\newtheorem{lem}[thm]{Lemma}
\newtheorem{pro}[thm]{Proposition}
\newtheorem{cor}[thm]{Corollary}
\newtheorem{rk}[thm]{Remark}
\begin{document}
\title[Homoclinic classes with shadowing property]
{\bf Homoclinic classes with shadowing property}


\author{Manseob Lee
 }
\address
     { Manseob Lee :  Department of Mathematics \\
       Mokwon University, Daejeon, 302-729, Korea
      }
\email{lmsds@mokwon.ac.kr  }

\thanks{
 2000 {\it Mathematics Subject
Classification.} 37C20,
37C05, 37C29, 37D05. \\
\indent {\it Key words and phrases.} shadowing property, basic
set, expansive, hyperbolic, generically, homoclinic class.}

\begin{abstract}
We show that for $C^1$ generic diffeomorphisms, an isolated
homoclinic class is shadowable if and only if it is a hyperbolic basic set.
\end{abstract}

\maketitle

\section{Introduction.}
In shadowing theory of dynamical systems, homoclinic classes
are basic objects in many investigations. These sets
are natural generalization of hyperbolic basic sets in Smale's
dynamical systems theory.

Let $M$ be a closed $C^{\infty}$manifold, and denote by $d$ the
distance on $M$ induced from a Riemannian metric $\|\cdot\|$ on
the tangent bundle $TM.$ Denote by ${\rm Diff}(M)$ the space of
diffeomorphisms of $M$ endowed with the $C^1$-topology.

Let $f\in{\rm Diff}(M)$. For $\delta>0$, a sequence of points
$\{x_i\}_{i=a}^{b}(-\infty\leq a< b \leq \infty)$ in $M$ is called
a {\it $\delta$-pseudo orbit} of $f$ if $d(f(x_i),
x_{i+1})<\delta$ for all $a\leq i\leq b-1$. A closed $f$-invariant set
$\Lambda\subset M$ is said to be {\it chain transitive} if for any
points $x, y\in\Lambda$ and $\delta>0$, there is a $\delta$-pseudo
orbit
$\{x_i\}_{i=a_{\delta}}^{b_{\delta}}\subset\Lambda (a_{\delta}<b_{\delta})$
of $f$ such that $x_{a_{\delta}}=x$ and $x_{b_{\delta}}=y$. For
given $x, y\in M$, we write $x\rightsquigarrow y$ if for any
$\delta>0$, there is a $\delta$-pseudo orbit
$\{x_i\}_{i=a}^{b}(a<b)$ of $f$ such that $x_a=x$ and $x_b =y.$

The set of points $\{x\in X: x\leftrightsquigarrow x\}$ is called
the {\it chain recurrent set} of $f$ and is denoted by
$\mathcal{R}(f).$ If we denote the set of periodic points $f$ by
$P(f)$, then $P(f)\subset\Omega(f)\subset\mathcal{R}(f).$ Here
$\Omega(f)$ is the non-wandering set of $f$. We write
$x\leftrightsquigarrow y$ if $x\rightsquigarrow y$ and
$y\rightsquigarrow x.$ The relation $\leftrightsquigarrow$ induces
on $\mathcal{R}(f)$ an equivalence relation whose classes are
called {\it chain components} of $f.$ Every chain component of $f$
is a closed $f$-invariant set.

Let $\Lambda\subset M$ be a closed $f$-invariant set, and denote
by $f|_{\Lambda}$ the restriction of $f$ to the set $\Lambda.$ We
say that $f|_{\Lambda}$ has the {\it shadowing property} if for
any
 $\epsilon>0$ there is $\delta>0$ such that for any
 $\delta$-pseudo orbit $\{x_i\}_{i\in\Z}\subset\Lambda$ of $f$ there is $y\in M$
 such that $d(f^i(y), x_i)<\epsilon$ for all $i\in\Z.$ We say that
 $f$ has the {\it Lipschitz shadowing property} if there exist $L>0$
 and $d>0$ such that for any $\delta(<d)$-pseudo orbit
 $\{x_i\}_{i\in\Z}$ there exists a point $y\in M$ such that
 $$d(f^i(y), x_i)<L\delta,\quad \mbox{for} i\in\Z.$$

 Note that the Lipschitz shadowing property is not
the shadowing property. In fact, Lipschitz shadowing contained in
shadowing(see \cite{P}).

It is well known that if $p$ is a hyperbolic periodic point $f$
with period $k$ then the sets $$W^s(p)=\{x\in M: f^{kn}(x)\to
p\;{\rm as}\; n\to\infty\}\;{\rm and}$$
$$W^u(p)=\{x\in M: f^{-kn}(x)\to p\;{\rm
as}\; n\to\infty\}$$ are $C^1$-injectively immersed submanifolds
of $M$. Every point $x\in W^s(p)\overline{\pitchfork} W^u(p)$ is
called a {\it homoclinic point} of $f$. The closure of the
homoclinic points of $f$ associated to $p$ is called the {\it
homoclinic class} of $f$ and it is denoted by $H_f(p).$

Note that the homoclinic class $H_f(p)$ is a subset of the chain
component $C_f(p)$. Every chain component is chain
transitive.

We consider all periodic points are the saddle type.  We say that
$p$ and $q$ are {\it homoclinically related}, and write $p\sim q$
if $W^s(p)\pitchfork W^u(q)\not=\phi,$ and $W^u(p)\pitchfork
W^s(q)\not=\phi,$ where $\pitchfork$ is transverse intersection.
Then we know that if $p\sim q$ then ${\rm index}(p)={\rm
index}(q).$

 We say that $\Lambda$ is {\it hyperbolic} if the tangent
bundle $T_{\Lambda}M$ has a $Df$-invariant splitting $E^s\oplus
E^u$ and there exists constants  $C>0$ and $0<\lambda<1$ such that
$$\|D_xf^n|_{E_x^s}\|\leq C\lambda^n\;\;{\rm and}\;\;\|D_xf^{-n}|_{E_x^u}\|\leq C\lambda^{-n} $$
for all $x\in \Lambda$ and $n\geq 0.$  It is well-known that if
$\Lambda$ is hyperbolic, then $\Lambda$ is shadowable.

We say that $\Lambda$ is {\it isolated} (or, {\it locally maximal})
if there is a compact neighborhood $U$ of $\Lambda$ such that
$\bigcap_{n\in\Z}f^n(U)=\Lambda.$

 We say that a subset $\mathcal{G}\subset {\rm Diff}(M)$ is {\it
 residual} if $\mathcal{G}$ contains the intersection of a
 countable family of open and dense subsets of ${\rm Diff}(M)$; in
 this case $\mathcal{G}$ is dense in ${\rm Diff}(M).$ A property
 "P" is said to be {\it$C^1$-generic} if "P" holds for all
 diffeomorphisms  which belong to some residual subset of ${\rm
 Diff}(M).$ We use the terminology "for $C^1$ generic $f$" to
 express "there is a residual subset $\mathcal{G}\subset {\rm
 Diff}(M)$ such that $P$ holdsfor any $f\in\mathcal{G}$".

 In \cite{AD}, Abdenur and D\'iaz has the following
 conjecture:

 \bigskip
 {\bf Conjecture.} {\it For $C^1$ generic $f,$ $f$ is shadowable if
 and only if it is hyperbolic.}

 \bigskip
Very recently, in \cite{PT} Pliyugin and Tikhomiriv proved that a
diffeomorphism has the Lipschitz shadowing is equivalent to
structural stability. By note, the conjecture is still open.

 In this paper, we give a partial answer to the above
conjecture.
 First, $C^1$-generically, chain recurrent set
is hyperbolic if and only if it is hyperbolic. Secondly,
$C^1$-generically, every shadowable homoclinic class containing a
saddle periodic point is hyperbolic if and only if it is hyperbolic.

It is explain in \cite{A} that every $C^1$-generic diffeomorphism
comes in one of two types: {\it tame diffeomorphisms}, which have a
finite number of homoclinic classes and whose nonwandering sets
admits partitions into a finite number of disjoint transitive
sets; and {\it wild diffeomorphisms}, which have an infinite
number of homoclinic classes and whose nonwandering sets admit no
such partitions. It is easy to show that if a diffeomorphism has a
finite number of chain components, then every chain component is
locally maximal, and therefore, every chain component of a tame
diffeomorphism is locally maximal. Hence we can get the following
result.
\begin{thm}For $C^1$ generic $f,$
if $f$ is tame, then the following two conditions are equivalent:
\begin{itemize}
\item[(a)]$\mathcal{R}(f)$ is hyperbolic,
\item[(b)] $\mathcal{R}(f)$ is shadowable.
\end{itemize}
\end{thm}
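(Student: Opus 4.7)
The plan is to prove the two implications separately, using the main theorem of the paper (isolated homoclinic classes are hyperbolic basic sets iff shadowable) as a black box for the hard direction. The forward implication (a) $\Rightarrow$ (b) is the classical Bowen shadowing lemma: if $\mathcal{R}(f)$ is hyperbolic, then $f|_{\mathcal{R}(f)}$ automatically enjoys the shadowing property, as already recalled in the excerpt.

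For the converse (b) $\Rightarrow$ (a), I first use the hypothesis that $f$ is tame: by definition $f$ has only finitely many homoclinic classes, and the excerpt observes that as a consequence $\mathcal{R}(f)$ splits into finitely many chain components $C_1,\ldots,C_k$, each of which is locally maximal with some compact isolating neighborhood $U_i$, and $C_i\cap C_j=\emptyset$ for $i\neq j$. Next I invoke the $C^1$-generic fact (Bonatti--Crovisier) that for $C^1$ generic $f$ every chain component contains a periodic point and coincides with the associated homoclinic class; thus I may assume each $C_i = H_f(p_i)$ for some saddle periodic point $p_i$.

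The next step is to show that the shadowing property of $f|_{\mathcal{R}(f)}$ descends to the shadowing property of each $f|_{C_i}$. Fix $i$ and let $\e_0>0$ be small enough that the closed $\e_0$-neighborhood of $C_i$ is contained in $U_i$ and is disjoint from $\bigcup_{j\neq i} C_j$. Given $0<\e<\e_0$, let $\delta>0$ be the shadowing constant provided by (b). Any $\delta$-pseudo orbit $\{x_n\}_{n\in\Z}\subset C_i$ is in particular a $\delta$-pseudo orbit in $\mathcal{R}(f)$, and so is $\e$-shadowed by some $y\in M$. Since $d(f^n(y),x_n)<\e<\e_0$ and $x_n\in C_i$ for all $n$, we have $f^n(y)\in U_i$ for every $n\in\Z$, hence
$$y\in\bigcap_{n\in\Z} f^{-n}(U_i)=C_i.$$
Therefore $f|_{C_i}$ has the shadowing property.

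Each $C_i$ is thus an isolated homoclinic class with the shadowing property, and the main theorem of the paper applies to give that $C_i$ is a hyperbolic basic set. Since $\mathcal{R}(f)=\bigsqcup_{i=1}^{k}C_i$ is a finite disjoint union of hyperbolic sets, $\mathcal{R}(f)$ itself is hyperbolic. The main obstacle I expect is the identification, up to a residual set, of chain components with homoclinic classes, which rests on nontrivial $C^1$-generic machinery (Pugh's closing lemma, Hayashi's connecting lemma, and the Bonatti--Crovisier theorem); once this is available, the descent of shadowing to each component via isolation is routine and the conclusion follows directly from the paper's main theorem.
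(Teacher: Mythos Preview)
Your proof is correct and follows exactly the approach the paper leaves implicit: the paper does not give a separate proof of this theorem but merely states it after observing that tame diffeomorphisms have finitely many, locally maximal chain components, tacitly intending that Theorem~\ref{thm0} then applies to each. Your detailed argument (descent of shadowing from $\mathcal{R}(f)$ to each $C_i$, identification $C_i=H_f(p_i)$ via Bonatti--Crovisier, and the finite hyperbolic union) supplies what the paper omits; note only that the step showing $y\in C_i$ is unnecessary under the paper's definition of shadowing (the tracing point is only required to lie in $M$), though the argument is correct and harmless.
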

We say that a closed $f$-invariant set $\Lambda$ is {\it basic
set} if $\Lambda$ is isolated and $f|_{\Lambda}$ is transitive.
The main result of this paper is the following.
\begin{thm}\label{thm0} For $C^1$ generic $f,$ an isolated
homoclinic class $H_f(p)$ is shadowable if and only if $H_f(p)$ is
hyperbolic basic set.
\end{thm}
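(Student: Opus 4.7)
The ``if'' direction is the classical Bowen shadowing lemma for hyperbolic basic sets, so the content lies in the ``only if'' direction. Fix a $C^1$-generic $f$ and assume $H_f(p)$ is isolated and shadowable. Because a homoclinic class is always transitive (the orbit closure of any transverse homoclinic point equals $H_f(p)$), the isolation hypothesis immediately promotes $H_f(p)$ to a basic set, so it remains only to prove hyperbolicity.

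My plan is to extract uniform hyperbolicity on periodic orbits first, and then extend it to all of $H_f(p)$. I would invoke standard residual-set hypotheses (Kupka--Smale, the genericity results of Bonatti--Crovisier, Pugh's closing lemma, Hayashi's connecting lemma) to assume that every periodic point in $H_f(p)$ is hyperbolic and homoclinically related to $p$ --- hence of the same index $s=\mathrm{ind}(p)$ --- that periodic points are dense in $H_f(p)$, and that $H_f(p)=C_f(p)$. The core step is a uniform Pliss-type estimate: there exist $m\in\N$ and $\lambda\in(0,1)$ such that every periodic orbit $\mathcal{O}(q)\subset H_f(p)$ of period $km$ satisfies
\[
\prod_{i=0}^{k-1}\bigl\|Df^m|_{E^s(f^{im}(q))}\bigr\|\leq\lambda^k,
\]
and symmetrically on $E^u$. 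If this fails, Franks' lemma produces a $C^1$-nearby $g$ with either a sink or source at some periodic orbit in (the continuation of) $H_f(p)$ --- contradicting isolation, since the basin cannot fit inside the isolating neighborhood $U$ --- or a periodic point of index $\neq s$; in the latter case, pulling the structural information back to $f$ and applying the connecting lemma yields an $f$-pseudo-orbit inside $H_f(p)$ that no true $f$-orbit $\epsilon$-shadows, contradicting shadowability.

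From this uniform Pliss estimate, density of periodic orbits together with Ma\~n\'e's theorem provides a dominated splitting $TM|_{H_f(p)}=E\oplus F$ with $\dim E=s$. To upgrade domination to hyperbolicity, I would argue that if $E$ fails to be uniformly contracting then a Liao--Pliss-style construction combined with the closing lemma produces a periodic orbit in $H_f(p)$ violating the uniform Pliss estimate; the symmetric argument handles $F$. Hence $H_f(p)$ is hyperbolic. The main obstacle is the Franks-perturbation step: one must translate perturbative information about $g$ back into an intrinsic statement about the generic $f$, using the perturbation only to detect nearby periodic-point structure for $f$ itself and then constructing the shadowing-violating pseudo-orbit directly inside $H_f(p)$ via density of periodic points and transverse intersections. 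Balancing the shadowing constants $\epsilon,\delta$ against the Franks perturbation size, while keeping the isolating neighborhood and the index structure of $H_f(p)$ intact, is the delicate technical point of the argument.
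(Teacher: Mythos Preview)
Your skeleton matches the paper's: establish uniform contraction/expansion (and domination) estimates on the periodic orbits in $H_f(p)$, then upgrade to hyperbolicity of the whole class. For that last step the paper simply invokes \cite{WGW} rather than redoing a Liao--Pliss argument. The substantive divergence is in how the uniform estimate is obtained.

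The paper does \emph{not} argue via a Franks sink/source or index-change dichotomy. Instead it shows that if periodic points $q\sim p$ admit arbitrarily weak eigenvalues, then (using \cite{S}) some nearby $g$ carries an $\eta$-\emph{simply periodic curve}---a short, normally hyperbolic periodic arc whose endpoints are homoclinically related to $p_g$. A generic lemma from Yang--Gan \cite{YG} transfers such a curve back to $f$ itself; isolation then forces this $f$-periodic arc into $\bigcap_{n} f^n(U)=C_f(p)=H_f(p)$; and shadowing visibly fails on a periodic arc (the return map along it is essentially the identity). This ``no $\delta$-weak eigenvalue'' conclusion, combined with Gourmelon's version of Franks' lemma that preserves homoclinic relations \cite{G}, gives the Ma\~n\'e-type estimates directly.

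Your proposed mechanism has a gap exactly where you flag it. A sink created for a nearby $g$ does not contradict isolation of $H_f(p)$ for $f$: isolation says only that $H_f(p)=\bigcap_{n} f^n(U)$, and a $g$-sink sitting in $U$ whose basin leaks outside $U$ causes no conflict with that equality. Similarly, a $g$-periodic point of wrong index is not automatically inside $H_f(p)$ for $f$, so the dimension obstruction to shadowing does not apply to $f$ without a transfer step. The simply-periodic-curve device is precisely the paper's concrete way to pull the perturbative information back to the generic $f$ and land an object inside $H_f(p)$ on which shadowing fails; your sketch does not supply a working alternative for this step.
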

A similar result is proved in \cite{LW} for locally maximal chain
transitive sets. More precisely, it is proved that
$C^1$-generically, for any locally maximal chain transitive set
$\Lambda$, if it is shadowable then $\Lambda$ is hyperbolic.

We say that $\Lambda$ is {\it expansive} of $f$ if there is a
constant $e>0$ such that for any $x, y\in \Lambda$ if $d(f^n(x),
f^n(y))<e$ for $n\in\Z$ then $x=y.$ In \cite{YG}, Yang and Gan
proved that $C^1$-generically, every expansive homoclinic class
$H_f(p)$ is hyperbolic. Clearly, if $H_f(p)$ is hyperbolic basic
set then $H_f(p)$ is expansive, and $H_f(p)$ has the local product
structure. Thus we get:

\begin{cor} For $C^1$ generic $f,$ the followings are equivalent
\begin{itemize}
\item[{(a)}] an isolated homoclinic class $H_f(p)$ is shadowable,
\item[{(b)}] a homoclinic class $H_f(p)$ is expansive,
\item[{(c)}] $H_f(p)$ is hyperbolic.
\end{itemize}
\end{cor}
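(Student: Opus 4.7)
The plan is to close a cycle of implications $(a)\Rightarrow(c)\Rightarrow(b)\Rightarrow(c)\Rightarrow(a)$, and use Theorem \ref{thm0} as the main ingredient together with the Yang--Gan result cited just before the corollary. Since two of the three conditions are already known from results quoted in the paper, the proof is essentially an assembly argument: the real work has been done in the proof of Theorem \ref{thm0}.

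The implication $(a)\Rightarrow(c)$ is immediate from Theorem \ref{thm0}: if $f$ lies in the residual set from that theorem and $H_f(p)$ is both isolated and shadowable, then $H_f(p)$ is a hyperbolic basic set, in particular hyperbolic. For $(c)\Rightarrow(b)$, I would invoke the classical fact that every hyperbolic invariant set is expansive, with expansivity constant determined by the size of the local stable/unstable manifolds and the contraction rate $\lambda$ from the definition of hyperbolicity given in Section 1; no genericity is needed here. For $(b)\Rightarrow(c)$, I would quote Yang--Gan \cite{YG} directly: on some residual $\mathcal{G}_1\subset\mathrm{Diff}(M)$, every expansive homoclinic class is hyperbolic.

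For the remaining implication $(c)\Rightarrow(a)$, the task is to show that a hyperbolic homoclinic class is automatically isolated and shadowable. Shadowing is immediate since every hyperbolic set is shadowable (this is explicitly stated in the introduction). Isolation is the subtle part: I would argue that once $H_f(p)$ is hyperbolic, the transverse homoclinic intersections defining $H_f(p)$ together with the uniform hyperbolic splitting produce a local product structure on $H_f(p)$, and a hyperbolic set with local product structure is locally maximal, hence a basic set. This is precisely the remark the author inserts before stating the corollary (``Clearly, if $H_f(p)$ is hyperbolic basic set then $H_f(p)$ is expansive, and $H_f(p)$ has the local product structure''). Intersecting with the residual set from Theorem \ref{thm0} and with $\mathcal{G}_1$ from Yang--Gan yields a single residual set on which all three conditions are equivalent.

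The main obstacle I anticipate is pinning down $(c)\Rightarrow(a)$ cleanly, specifically the step that a hyperbolic homoclinic class $H_f(p)$ automatically carries a local product structure and is therefore locally maximal. In the $C^1$-generic setting this uses that $H_f(p)$ is the closure of transverse homoclinic points and that the stable and unstable laminations extend continuously on a hyperbolic set; the product structure then gives an isolating neighborhood. Everything else in the corollary is a bookkeeping combination of Theorem \ref{thm0}, the Yang--Gan theorem, and the textbook fact that hyperbolicity implies both expansiveness and shadowing.
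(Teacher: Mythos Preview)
Your proposal is correct and matches the paper's approach: the paper derives the corollary in the sentence immediately preceding it by combining Theorem~\ref{thm0} for $(a)\Leftrightarrow$ ``hyperbolic basic set'', the Yang--Gan result \cite{YG} for $(b)\Rightarrow(c)$, and the classical facts that hyperbolicity implies expansiveness and that a hyperbolic basic set has local product structure. You have actually spelled out the $(c)\Rightarrow(a)$ step (hyperbolic homoclinic class $\Rightarrow$ local product structure $\Rightarrow$ locally maximal) more carefully than the paper does, and your identification of this as the only nontrivial point is exactly right.
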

\section{Proof of Theorem \ref{thm0}}
Let $M$ and  $f\in{\rm Diff}(M)$ be as before. In this section, to
prove Theorem \ref{thm0}, we use the techniques developed by
Ma\~n\'e \cite{M}. Indeed, let $\Lambda_j(f)$ be the closure of
the set of hyperbolic periodic points of $f$ with index $j (0\leq
j\leq{\rm dim}M)$. Then if there is a $C^1$-neighborhood
$\mathcal{U}(f)$ of $f$ such that for any $g\in\mathcal{U}(f),$
any periodic points of $g$ is hyperbolic, and
$\Lambda_i(f)\cap\Lambda_j(f)=\phi$ for $0\leq i\not=j\leq{\rm
dim}M$, then $f$ satisfies both Axiom A and the no-cycle
condition. We can use the techniques to our result. To prove the
result, we will use the following Lemmas. Let $p$ be a hyperbolic
periodic point of $f\in{\rm Diff}(M).$

\begin{rk}\label{rk1}By Smale's transverse homoclinic theorem, $H_f(p)=\overline{\{q\in P(f):q\sim
p\}},$ and if $q\sim p$ then $H_f(p)=H_f(q).$
\end{rk}

\begin{lem}\label{lem0} Let $H_f(p)$ be the homoclinic class of $p$. Suppose that $f$ has the shadowing property on $H_f(p)$. Then
for any hyperbolic periodic point $q\in H_f(p),$
$$W^s(p)\cap W^u(q)\not=\phi,\quad \mbox{and}\quad W^u(p)\cap W^s(q)\not=\phi.$$
\end{lem}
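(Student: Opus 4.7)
The plan is to use chain transitivity of $H_f(p)$ together with the shadowing hypothesis to manufacture orbits connecting $p$ and $q$, and then to use hyperbolicity of the endpoints to recognise those shadowing orbits as points of $W^u(p)\cap W^s(q)$ and $W^s(p)\cap W^u(q)$.

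First I would fix $\e>0$ smaller than the sizes of the local stable and unstable manifolds of both $p$ and $q$. The point of this smallness is the standard characterisation of hyperbolic invariant manifolds: a point whose forward (respectively backward) semi-orbit stays $\e$-close to the orbit of a hyperbolic periodic point $r$ must lie in $W^s(r)$ (respectively $W^u(r)$). Let $\delta>0$ be the shadowing constant associated to $\e$ for $f|_{H_f(p)}$.

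By Remark \ref{rk1} the homoclinic class $H_f(p)$ is transitive, hence chain transitive, so there exists a finite $\delta$-pseudo orbit $\{x_0,x_1,\dots,x_N\}\subset H_f(p)$ with $x_0=p$ and $x_N=q$. I would splice it with the real orbits of $p$ and $q$ into the bi-infinite sequence
$$\tilde x_i=\begin{cases} f^{i}(p), & i\le 0,\\ x_i, & 0\le i\le N,\\ f^{i-N}(q), & i\ge N,\end{cases}$$
which is a $\delta$-pseudo orbit lying entirely in the invariant set $H_f(p)$. Applying the shadowing property on $H_f(p)$ then produces $y\in M$ with $d(f^i(y),\tilde x_i)<\e$ for every $i\in\Z$. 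The bound for $i\le 0$ keeps the negative semi-orbit of $y$ within $\e$ of the orbit of $p$, so the choice of $\e$ forces $y\in W^u(p)$; the bound for $i\ge N$ keeps the positive semi-orbit of $f^N(y)$ within $\e$ of the orbit of $q$, so $y\in W^s(q)$. Thus $W^u(p)\cap W^s(q)\neq\emptyset$. Interchanging the roles of $p$ and $q$ in the construction yields a point in $W^u(q)\cap W^s(p)$, completing the lemma.

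The main technical point is the last step: promoting one-sided $\e$-closeness to the orbit of a hyperbolic periodic point into membership in its global stable or unstable manifold. This I would handle by the local stable/unstable manifold theorem applied to $f^{k_p}$ at the fixed point $p$ and to $f^{k_q}$ at $q$, which also dictates how small $\e$ must be and that the choice be uniform in both $p$ and $q$ at the outset. Nothing beyond standard hyperbolic theory is required.
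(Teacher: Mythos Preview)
Your argument is correct and follows the same overall scheme as the paper: build a bi-infinite $\delta$-pseudo orbit in $H_f(p)$ that agrees with the orbit of $p$ on one tail and with the orbit of $q$ on the other, shadow it, and invoke the local stable/unstable manifold characterisation at hyperbolic periodic points to place the shadowing point in the desired intersection.

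The one difference is how the connecting segment of the pseudo orbit is produced. You appeal to the (standard) transitivity, hence chain transitivity, of $H_f(p)$ to obtain abstractly a finite $\delta$-chain in $H_f(p)$ from $p$ to $q$. The paper instead builds this segment explicitly: it picks a periodic point $\gamma\sim p$ with $d(\gamma,q)<\delta/4$ (using the density statement in Remark~\ref{rk1}), takes a transverse heteroclinic point $x\in W^s(p)\pitchfork W^u(\gamma)$, and uses a finite piece of the true orbit of $x$ to pass from a neighbourhood of $q$ to a neighbourhood of $p$; the resulting pseudo orbit lies in $H_f(p)$ because the orbit of $x$ does. Your route is shorter and uses only the qualitative fact that homoclinic classes are chain transitive; the paper's route is more self-contained in that it never invokes transitivity as a black box, only the density of homoclinically related periodic points already recorded in Remark~\ref{rk1}. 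Either way the conclusion and the use of shadowing plus the local invariant manifold theorem are identical.
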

\begin{proof}In this proof, we will show that $W^s(p)\cap
W^u(q)\not=\phi.$ Other case is similar. Since $p$ and $q$ are
hyperbolic saddles, there are $\epsilon(p)>0$ and $\epsilon(q)>0$
such that both $W^s_{\epsilon(p)}(p)$ and $W^u_{\epsilon(q)}(q)$
are $C^1$-embedded disks, and such that if $d(f^n (x),
f^n(p))\leq\epsilon(p)$ for $n\geq0,$ then $x\in
W^s_{\epsilon(p)}(p),$ and if $d(f^n(x), f^n (q))<\epsilon(q)$ for
$n\leq 0$ then $x\in W^u_{\epsilon(q)}(q).$

 Set $\epsilon={\rm
min}\{\epsilon(p), \epsilon(q)\},$ and let
$0<\delta=\delta(\epsilon)<\epsilon$ be the number of the
shadowing property of $f|_{H_f(p)}$ with respect to $\epsilon.$

 To
simplify, we assume that $f(p)=p$ and $f(q)=q.$  Since
$H_f(p)=\overline{\{q\in P(f):q\sim p\}}$, we can choose a $\gamma
\sim p$ such that $d(q, \gamma)<\delta/4.$ For $x\in
W^s(p)\pitchfork W^u(\gamma),$ choose $n_1>0$ and $n_2>0$ such
that $d(f^{n_1}(x), p)<\delta/4$ and $d(f^{-n_2}(x),
\gamma)<\delta/4.$ Thus $d(f^{-n_2}(x), q)<d(f^{-n_2}(x),
\gamma)+d(\gamma, q)<\delta/2.$ Therefore, we can make the
following pseudo orbit:
$$\xi=\{\dots, p, f^{n_1-1}(x), \ldots, f(x), x, f^{-1}(x),
\ldots, f^{-n_2+1}(x), q, \dots\}.$$ Then clearly, $\xi\subset
H_f(p).$ Since $f$ has the shadowing property on $H_f(p),$ choose
a point $y\in M$ such that $d(f^{i}(y), x_i)<\epsilon,$ for
$i\in\Z.$ Thus $f^{n_1+l}(y)\in W^s_{\epsilon}(p),$ and
$f^{-n_2-l}(y)\in W^u_{\epsilon}(q)$ for $l>0.$ Therefore, $y\in
f^{-n_1-l}(W^s_{\epsilon}(p))\cap f^{n_2+l}(W^u_{\epsilon}(q)).$
This means $y\in W^s(p)\cap W^u(q).$ Thus $W^s(p)\cap
W^u(q)\not=\phi.$
\end{proof}
\begin{lem}\label{lem1} There is a residual set $\mathcal{G}_1\subset
{\rm Diff}(M)$ such that $f\in\mathcal{G}_1$ satisfies the
following properties:
\begin{itemize}
\item[{(a)}] Every periodic point of $f$ is hyperbolic and all
their invariant manifolds are transverse (Kupka-Smale).
\item[{(b)}] $C_f(p)=H_f(p)$, where $p$ is a hyperbolic periodic point (\cite{BC}).
\end{itemize}
\end{lem}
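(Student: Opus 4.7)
The plan is to obtain $\mathcal{G}_1$ as the intersection of two previously established residual subsets of $\mathrm{Diff}(M)$, one producing (a) and the other producing (b), and to conclude by the Baire category theorem that a finite intersection of residual sets in the Baire space $\mathrm{Diff}(M)$ is again residual.

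For (a), I invoke the classical Kupka-Smale theorem, which supplies a residual subset $\mathcal{G}_{KS} \subset \mathrm{Diff}(M)$ on which every periodic point is hyperbolic and all stable and unstable manifolds of periodic orbits intersect transversely. The standard derivation expresses $\mathcal{G}_{KS} = \bigcap_{n \geq 1} \mathcal{U}_n$, where $\mathcal{U}_n$ is the set of $f$ such that every periodic point of period at most $n$ is hyperbolic and the corresponding local invariant manifolds (truncated at size $n$) meet transversely. Each $\mathcal{U}_n$ is open by persistence of hyperbolicity and of transverse intersection under $C^1$ perturbations, and dense by standard $C^1$-small local perturbations supported near finitely many periodic orbits.

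For (b), I invoke the result of Bonatti and Crovisier \cite{BC}, which yields a residual subset $\mathcal{G}_{BC} \subset \mathrm{Diff}(M)$ with the property that for every hyperbolic periodic point $p$ of $f \in \mathcal{G}_{BC}$, the chain component $C_f(p)$ coincides with the homoclinic class $H_f(p)$. The engine of their argument is Hayashi's $C^1$ connecting lemma, used to convert a pseudo-orbit linking $W^u(p)$ to $W^s(p)$ into a genuine transverse homoclinic intersection after an arbitrarily $C^1$-small perturbation; the residual set is then built by a Baire argument indexed by countably many pairs of open balls of $M$ together with the countable family of periodic orbits generated by (a).

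Setting $\mathcal{G}_1 := \mathcal{G}_{KS} \cap \mathcal{G}_{BC}$ then produces the desired residual subset on which both (a) and (b) hold. No serious obstacle arises, since both ingredients are already in the literature; the only delicate point is ensuring that the Baire construction yielding (b) produces one uniform residual set on which $C_f(p) = H_f(p)$ holds \emph{simultaneously} for every hyperbolic periodic point $p$, but this is routine because on $\mathcal{G}_{KS}$ the collection of periodic orbits is countable, so one may intersect the corresponding countable family of residual subsets without leaving the residual category.
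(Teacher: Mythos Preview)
Your proposal is correct and matches the paper's treatment: the paper does not prove this lemma but simply cites the Kupka--Smale theorem for (a) and Bonatti--Crovisier \cite{BC} for (b), implicitly taking $\mathcal{G}_1$ as the intersection of the two residual sets. Your additional commentary on the inner workings of each cited result goes beyond what the paper provides, but the approach is identical.
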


\begin{lem}\label{lem2}  There is a residual set
$\mathcal{G}_2\subset {\rm Diff}(M)$ such that if
$f\in\mathcal{G}_2,$ and $f$ has the shadowing property on
$H_f(p),$ then for any hyperbolic periodic point $q\in H_f(p),$
$$W^s(p)\pitchfork W^u(q)\not=\phi\quad\mbox{and}\quad W^u(p)\pitchfork
W^s(q)\not=\phi.$$
\end{lem}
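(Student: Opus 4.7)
The plan is to take $\mathcal{G}_2 := \mathcal{G}_1$, the Kupka-Smale residual set supplied by Lemma \ref{lem1}. Once this identification is made, the statement should follow at once by combining the non-empty intersection of invariant manifolds guaranteed by Lemma \ref{lem0} with the Kupka-Smale property, which forces every such intersection to be transverse.

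More concretely, I would fix $f\in\mathcal{G}_1$ having the shadowing property on $H_f(p)$, and let $q\in H_f(p)$ be any hyperbolic periodic point. Lemma \ref{lem0} already delivers a point $x\in W^s(p)\cap W^u(q)$. By property (a) of Lemma \ref{lem1}, the invariant manifolds of hyperbolic periodic points of $f$ intersect transversely wherever they meet, so at this $x$ one has $T_x W^s(p) + T_x W^u(q) = T_x M$, and hence $W^s(p)\pitchfork W^u(q)\neq\emptyset$. The argument for $W^u(p)\pitchfork W^s(q)\neq\emptyset$ is identical, using the second conclusion of Lemma \ref{lem0}.

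Since Lemma \ref{lem0} does all the real analytic work and Lemma \ref{lem1} is being invoked verbatim, there is no substantive obstacle in this step; the only thing to observe is that the dimension count is automatically consistent, because a transverse nonempty intersection of $W^s(p)$ with $W^u(q)$ (and of $W^u(p)$ with $W^s(q)$) requires $\dim W^s(p)+\dim W^u(q)\geq \dim M$ and $\dim W^u(p)+\dim W^s(q)\geq \dim M$, and summing these gives $2\dim M$, so both inequalities must be equalities. This in fact forces ${\rm index}(p)={\rm index}(q)$ as a bonus, which will be useful later when one wants to apply Ma\~n\'e's techniques to conclude hyperbolicity of $H_f(p)$. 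The main obstacle, if any, lies not in proving Lemma \ref{lem2} itself but in having set things up so that Lemma \ref{lem0} could be proved under the mere hypothesis of shadowing on $H_f(p)$; given that, Lemma \ref{lem2} is essentially a free consequence of Kupka-Smale.
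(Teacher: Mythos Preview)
Your proposal is correct and follows essentially the same approach as the paper: set $\mathcal{G}_2$ equal to the Kupka--Smale residual set from Lemma~\ref{lem1}(a), and combine it with Lemma~\ref{lem0}. In fact your write-up is more explicit than the paper's own proof, which invokes Kupka--Smale but does not cite Lemma~\ref{lem0} to justify that the intersections are nonempty before asserting they are transversely nonempty; your version closes that gap, and the index observation you extract is indeed the point that gets used downstream.
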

\begin{proof}Let $\mathcal{G}_2$ be Lemma \ref{lem2}(a), and let$f\in\mathcal{G}_2.$ Let $q\in H_f(p)\cap
P(f)$ be a hyperbolic saddle. Then $W^s(p)$ and $W^u(q)$ are
transverse, and $W^u(p)$ and $W^s(q)$  are also transverse.
Thus $$W^s(p)\pitchfork W^u(q)\not=\phi\quad\mbox{and}\quad
W^u(p)\pitchfork W^s(q)\not=\phi.$$
\end{proof}

\begin{pro}\label{pro1} For $C^1$ generic $f,$ if an isolated homoclinic class $H_f(p)$ is
shadowable, then there exist constants $m>0$ and $0<\lambda<1$
such that for any periodic point $p\in\Lambda,$
$$\prod_{i=0}^{\pi(p)-1}\|Df^m|_{E^s(f^{im}(p))}\|<\lambda^{\pi(p)},$$
$$\prod_{i=0}^{\pi(p)-1}\|Df^{-m}|_{E^s(f^{-im}(p))}\|<\lambda^{\pi(p)}
$$
and
$$\|Df^m|_{E^s(p)}\|\cdot\|Df^{-m}|_{E^u(f^{-m}(p))}\|<\lambda^2,$$
where $\pi(p)$ denotes the period of $p.$
\end{pro}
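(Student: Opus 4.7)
The plan is to apply Ma\~n\'e's hyperbolicity criterion through Franks' lemma. The shadowing hypothesis enters via Lemmas \ref{lem0} and \ref{lem2}, which force every hyperbolic periodic point in $H_f(p)$ to be homoclinically related to $p$, and hence to have the same index as $p$; this supplies a uniform splitting $E^s\oplus E^u$ over the periodic points of $H_f(p)$. Any failure of one of the three inequalities will then be promoted, by a small $C^1$-perturbation provided by Franks' lemma, to a non-hyperbolic (or wrong-index) periodic orbit inside the continuation of $H_f(p)$, contradicting the Kupka--Smale property of nearby generic diffeomorphisms.

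Concretely, I would first enlarge the residual set of Lemmas \ref{lem1} and \ref{lem2} to a residual $\mathcal{G}\subset{\rm Diff}(M)$ on which the map $g\mapsto H_g(p_g)$ is lower semi-continuous at $f$ and on which the compact isolating neighborhood $U$ of $H_f(p)$ remains an isolating neighborhood of $H_g(p_g)$ for all $g$ in a small $C^1$-neighborhood of $f$; this uses that $H_f(p)$ is isolated. For $f\in\mathcal{G}$ with $H_f(p)$ shadowable, Lemma \ref{lem2} together with Lemma \ref{lem1}(a) gives that every periodic point of $f$ in $H_f(p)$ is hyperbolic of index $j:={\rm index}(p)$; write $\dim E^s=j$. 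Now assume the first inequality fails: for every $m\in\N$ and every $\lambda\in(0,1)$ there is a hyperbolic periodic $q\in H_f(p)$ of period $\pi(q)$ with
$$
\prod_{i=0}^{\pi(q)-1}\|Df^m|_{E^s(f^{im}(q))}\|\ge\lambda^{\pi(q)}.
$$
Franks' lemma applied on an arbitrarily small tubular neighborhood of ${\rm orb}(q)$ produces $g$ arbitrarily $C^1$-close to $f$, coinciding with $f$ outside this neighborhood, such that $Dg^{m\pi(q)}|_{E^s(q)}$ acquires an eigenvalue of modulus exactly $1$ (making $q$ non-hyperbolic for $g$) or strictly larger than $1$ (making $q$ a periodic point of $g$ of index $<j$). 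Since the perturbation is supported well inside $U$, the orbit of $q$ stays in $U$ and persists in $H_g(p_g)$ by the continuation property. The second inequality is obtained by the symmetric argument applied to $f^{-1}$, while the third (domination) inequality follows from a Franks perturbation that mixes the bundles $E^s$ and $E^u$ along ${\rm orb}(q)$, again producing a non-real eigenvalue of modulus one after a further small perturbation.

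The principal obstacle is that shadowing is not a $C^1$-open property, so neither Lemma \ref{lem0} nor Lemma \ref{lem2} transfers directly to the perturbed $g$. I would overcome this by requiring $g$ itself to belong to the Kupka--Smale residual set $\mathcal{G}_1$, which is $C^1$-dense: this rules out the non-hyperbolic outcome of Franks' lemma at once, and the wrong-index outcome is excluded because the index $j$ propagates to every hyperbolic periodic point of $H_g(p_g)$ in a $C^1$-neighborhood of $f$ by the persistence of transverse homoclinic intersections (rather than by shadowing). This is precisely the bookkeeping underlying Ma\~n\'e's technique from \cite{M}, here applied to the restriction of $f$ to the isolated class $H_f(p)$.
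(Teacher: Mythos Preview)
Your proposal has a genuine gap at precisely the point you flag as the ``principal obstacle''. Franks' lemma hands you a \emph{specific} $g$ for which the chosen periodic orbit $q$ becomes non-hyperbolic or changes index; you cannot then demand that this same $g$ lie in the Kupka--Smale residual, so the non-hyperbolic outcome is not ruled out. For the wrong-index outcome your appeal to ``persistence of transverse homoclinic intersections'' also fails: once the index of $q$ drops, the dimensions of $W^s(q)$ and $W^u(q)$ jump discontinuously, so the transverse intersections $W^s(p)\pitchfork W^u(q)$ and $W^u(p)\pitchfork W^s(q)$ are not carried to $g$, and you have no reason to place the perturbed $q$ inside $H_g(p_g)$. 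In short, nothing you derive about $g$ contradicts anything, because you have no robust hypothesis on $g$---shadowing, same-index, and Kupka--Smale all fail to transfer.

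The paper closes this gap by inserting an intermediate step that uses shadowing \emph{at $f$ itself}, with no perturbation. Via Lemma~\ref{lem5} (whose proof relies on the $\eta$-simply periodic curve machinery of Lemma~\ref{lem3} and \cite{YG,S}), a weak eigenvalue among the periodic points $q\sim p$ would force, generically, an $f$-invariant normally hyperbolic arc $\mathcal{J}_q\subset C_f(p)$; the restriction of $f$ to such an arc never has the shadowing property, contradicting the hypothesis on $H_f(p)$. This gives a $\delta>0$ with no $\delta$-weak eigenvalue for $q\sim p$. Then the genericity statement in Remark~\ref{rk4} upgrades this to a robust property: for every $g$ in a $C^1$-neighbourhood $\mathcal{U}(f)$, no $q\sim p_g$ has a $\delta/2$-weak eigenvalue. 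Only now does the paper invoke a Franks-type lemma, and it uses Gourmelon's version \cite{G}, which perturbs the derivative along a periodic orbit \emph{while preserving the homoclinic relation with $p_g$}; thus the perturbed orbit stays in the class, and the robust no-weak-eigenvalue condition forces the family $\{D_{f^i(q)}f: q\sim p\}$ to be uniformly hyperbolic in Ma\~n\'e's sense, yielding the three inequalities. The two ingredients you are missing are therefore (i) the simply-periodic-curve contradiction that exploits shadowing directly, and (ii) Gourmelon's refinement of Franks' lemma.
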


From now, we use the following notion in \cite{YG}.
 For $\eta>0$ and $f\in{\rm Diff}(M)$, a $C^1$ curve $\gamma$ is
called {\it $\eta$-simply periodic curve} of $f$ if
\begin{itemize}
\item $\gamma$ is diffeomorphic to $[0, 1]$ and it two end points
are hyperbolic periodic points of $f,$
\item $\gamma$ is periodic with period $\pi(\gamma)$, i.e.,
$f^{\pi(\gamma)}(\gamma)=\gamma,$ and $l(f^i(\gamma))<\eta$ for
any $0\leq i\leq\pi(\gamma)-1,$ where $l(\gamma)$ denotes the
length of $\gamma.$
\item $\gamma$ is normally hyperbolic.
\end{itemize}

Let $p$ be a periodic point of $f.$ For $\delta\in (0, 1)$, we say
$p$ has a {\it $\delta$-weak eigenvalue} if $Df^{\pi(p)}(p)$ has
an eigenvalue $\mu$ such that
$$(1-\delta)^{\pi(p)}<\mu<(1+\delta)^{\pi(p)}.$$

 \begin{lem}\label{lem3} (\cite{YG}) There is a residual set
$\mathcal{G}_3\subset {\rm Diff}(M)$ such that any
$f\in\mathcal{G}_3$ and hyperbolic periodic
 point $p$ of $f$, we have:
 \begin{itemize}
\item[{(a)}] for any $\eta>0$, if for any $C^1$ neighborhood
$\mathcal{U}(f)$ of $f,$ some $g\in\mathcal{U}(f)$ has an
$\eta$-simply periodic curve $\gamma,$ such that two endpoints of
$\gamma$ are homoclinic related with $p_g,$ then $f$ has an
$2\eta$-simply periodic curve $\alpha$ such that the two endpoints
of $\alpha$ are homoclinically related with $p.$

\item[{(b)}] for any $\delta>0$ if for any $C^1$ neighborhood
$\mathcal{U}(f)$ of $f$, some $g\in\mathcal{U}(f)$ has a periodic
point $q\sim p_g$ with $\delta$-weak eigenvalue, then $f$ has a
periodic point $q'\sim p$ with $2\delta$-weak eigenvalue.

\item[{(c)}] for any $\delta>0$ if for any $C^1$-neighborhood
$\mathcal{U}(f)$ some $g\in\mathcal{U}(f)$ has a periodic point
$q\sim p_g$ with $\delta$-weak eigenvalue and every eigenvalue of
$q$ is real, then $f$ has a periodic point $q'\sim p$ with
$2\delta$-weak eigenvalue and every eigenvalue of $q'$ is real.

\item[{(d)}] for any $\delta>0$, $f$ has a periodic point $q\sim
p$ with $\delta$-weak eigenvalue, then $f$ has a periodic point
$q'\sim p$ with $\delta$-weak eigenvalue, whose eigenvalue are all
real.
 \end{itemize}
 \end{lem}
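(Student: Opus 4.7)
The plan is to prove Lemma \ref{lem3} (attributed to \cite{YG}) via a standard Baire category argument, using the separability of $({\rm Diff}(M), d_{C^1})$. All four properties in (a)--(d) will be captured by countable families of $C^1$-open subsets of ${\rm Diff}(M)$, and the factor of $2$ appearing in the conclusions will arise naturally from the slack needed to convert ``approximable by diffeomorphisms with the property'' into ``has the property outright''.

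First I would parametrize the hypotheses by a countable family of $C^1$-open subsets. For rational $\lambda > 0$ and positive integers $n, k$, let $\mathcal{W}^{(a)}(\eta, n, k) \subset {\rm Diff}(M)$ be the set of $f$ admitting a hyperbolic periodic point $p$ of period at most $k$ together with an $\eta$-simply periodic curve of period at most $n$ whose endpoints are homoclinically related to $p$. Define $\mathcal{W}^{(b)}, \mathcal{W}^{(c)}, \mathcal{W}^{(d)}$ analogously by replacing the simply periodic curve with a hyperbolic periodic point of period at most $n$ with $\delta$-weak eigenvalue, adjusting for the real-spectrum requirements in (c) and (d). Each of these sets is $C^1$-open: normal hyperbolicity of the simply periodic curves is robust, transverse homoclinic intersections persist, hyperbolic periodic points admit unique $C^1$-continuations, eigenvalues of the period matrix depend continuously on $f$, and all the defining inequalities are strict.

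Second, for each family I would introduce
\[
\mathcal{R}_\bullet(\lambda, n, k) \,=\, \mathcal{W}_\bullet(2\lambda, n, k) \,\cup\, \bigl({\rm Diff}(M) \setminus \overline{\mathcal{W}_\bullet(\lambda, n, k)}\bigr),
\]
where $\bullet$ ranges over (a)--(d). Each $\mathcal{R}_\bullet(\lambda, n, k)$ is open, and its complement sits inside $\overline{\mathcal{W}_\bullet(2\lambda, n, k)} \setminus \mathcal{W}_\bullet(2\lambda, n, k) = \partial \mathcal{W}_\bullet(2\lambda, n, k)$, which is nowhere dense because $\mathcal{W}_\bullet(2\lambda, n, k)$ is open. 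Hence each $\mathcal{R}_\bullet(\lambda, n, k)$ is open and dense, and the desired residual set is
\[
\mathcal{G}_3 \,=\, \mathcal{G}_1 \,\cap\, \bigcap_{\bullet,\, \lambda,\, n,\, k} \mathcal{R}_\bullet(\lambda, n, k),
\]
with $\mathcal{G}_1$ the Kupka--Smale residual of Lemma \ref{lem1}. To verify the conclusions for $f \in \mathcal{G}_3$: the hypothesis of (a) for a given $\eta > 0$ gives $f \in \overline{\mathcal{W}^{(a)}(\eta, n_0, k_0)}$ for appropriate period bounds, and the defining property of $\mathcal{R}^{(a)}(\eta, n_0, k_0)$ forces $f \in \mathcal{W}^{(a)}(2\eta, n_0, k_0)$, which is exactly the $2\eta$-simply periodic curve of $f$ with endpoints homoclinically related to $p$. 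Parts (b) and (c) are formally identical. For (d), one additionally invokes the standard $C^1$-small perturbation that diagonalizes the period matrix of a $\delta$-weak periodic orbit while preserving its homoclinic relation to $p$, reducing (d) to the corresponding real-spectrum open set and handling it via the same Baire mechanism.

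The main obstacle is the openness of the families $\mathcal{W}_\bullet$, especially $\mathcal{W}^{(a)}$: one must show that if $f$ admits an $\eta$-simply periodic curve with hyperbolic, homoclinically-related endpoints, then every sufficiently $C^1$-nearby $g$ admits one with the continuation $p_g$ playing the role of $p$. This hinges on the quantitative persistence of normal hyperbolicity of the curve, together with the persistence of transverse intersections supplying the homoclinic relation, and it is precisely this quantitative persistence statement that absorbs the factor of $2$ in the conclusion and constitutes the technical heart of \cite{YG}.
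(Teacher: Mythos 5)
First, note that the paper itself offers no proof of Lemma \ref{lem3}: it is imported verbatim from \cite{YG}, so the only meaningful comparison is with the genericity argument in that reference, which is indeed a Baire-category argument of the general shape you propose, but organized around a localization of the periodic orbit that your write-up omits -- and that omission is a genuine gap. Your sets $\mathcal{W}_\bullet(\eta,n,k)$ only remember \emph{a period bound} $k$ for the hyperbolic periodic point, not the point itself. The hypothesis of (a)--(c) concerns the continuation $p_g$ of one \emph{fixed} hyperbolic periodic point $p$ of $f$, and the conclusion must produce a curve (or weak-eigenvalue periodic point) homoclinically related to that same $p$. After your Baire step, membership of $f$ in $\mathcal{W}^{(a)}(2\eta,n_0,k_0)$ only yields a $2\eta$-simply periodic curve whose endpoints are related to \emph{some} hyperbolic periodic point of period at most $k_0$, which may lie in a completely different homoclinic class (even of a different index). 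The standard repair, and the actual mechanism in \cite{YG}-type arguments, is to add to the countable data a localization of the orbit: one uses a countable basis of $M$, the fact that a Kupka--Smale diffeomorphism has only finitely many periodic points of period at most $k$, and the local uniqueness of continuations of hyperbolic orbits, so that for the given $f$ and $p$ one can choose basis elements that pin down $p$ among all periodic points of period $\le \pi(p)$; only then does the conclusion of the Baire argument speak about $p$ itself.

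Two further points. (i) Your verification step ``the hypothesis of (a) gives $f\in\overline{\mathcal{W}^{(a)}(\eta,n_0,k_0)}$ for appropriate period bounds'' is unjustified: the witnesses $g$ provided by the hypothesis may have $\eta$-simply periodic curves (or weak periodic points $q$ in (b),(c)) whose periods tend to infinity as $\mathcal{U}(f)$ shrinks, and $\overline{\bigcup_n\mathcal{W}(\eta,n,k)}$ is strictly larger than $\bigcup_n\overline{\mathcal{W}(\eta,n,k)}$; this particular defect is repairable by taking the union over $n$ \emph{before} closing (the union of open sets is open), i.e.\ by not slicing over the period of the curve at all. (ii) The openness of $\mathcal{W}^{(a)}$, which you yourself flag as ``the technical heart'' and defer to \cite{YG}, is exactly the nontrivial content: a normally hyperbolic compact arc with boundary is not automatically persistent as an \emph{invariant} curve for nearby diffeomorphisms (persistence of normally hyperbolic manifolds with boundary requires inflowing/overflowing-type conditions), and in (b)--(d) the weak-eigenvalue point $q$ supplied by the hypothesis may be non-hyperbolic, hence without a continuation, so an auxiliary Franks-type perturbation preserving the relation $q\sim p_g$ is needed before any openness can be invoked. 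Note also that if your sets really were open and exactly captured the property, the doubled constants $2\eta$, $2\delta$ in the conclusion would be superfluous (a generic $f$ lying in the closure of an open set lies in the set); the very presence of the factor $2$ in the statement signals that the argument cannot rest on naive openness alone, which is where the perturbative losses are absorbed in the actual proof.
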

 The following lemma says that the map $f\mapsto C_f(p)$ is upper
 semi-continuous.
\begin{lem}\label{lem4} For any $\epsilon>0,$ there is $\delta>0$
such that if $d_1(f, g)<\delta$ then $C_g(p_g)\subset
B_{\epsilon}(C_f(p)),$ where $d_1$ is the $C^1$-metric on ${\rm
Diff(M)}.$
\end{lem}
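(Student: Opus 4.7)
The plan is to argue by contradiction, exploiting the elementary fact that $C^0$-close maps send pseudo-orbits to pseudo-orbits with only slightly degraded accuracy. Suppose upper semi-continuity fails at some $\epsilon_0>0$. Then there is a sequence $g_n\to f$ in the $C^1$-topology and points $x_n\in C_{g_n}(p_{g_n})$ such that $d(x_n, C_f(p))\geq \epsilon_0$. Since $p$ is a hyperbolic periodic point of $f$, its continuation $p_{g_n}$ is well-defined for $n$ large, with $p_{g_n}\to p$. By compactness of $M$, after passing to a subsequence we may assume $x_n\to x_\infty$; then $d(x_\infty, C_f(p))\geq \epsilon_0$, so $x_\infty\notin C_f(p)$. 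The strategy is to derive a contradiction by showing $x_\infty\leftrightsquigarrow p$ under $f$, which by Lemma \ref{lem1}(b) would place $x_\infty$ in $C_f(p)=H_f(p)$.

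Fix an arbitrary $\delta>0$. Choose $n$ so large that $\sup_{z\in M}d(f(z),g_n(z))<\delta/3$, $d(p_{g_n},p)<\delta/3$ and $d(x_n,x_\infty)<\delta/3$. Since $x_n\in C_{g_n}(p_{g_n})$, there is a $\delta/3$-pseudo-orbit $\{y_0,\ldots,y_N\}$ of $g_n$ with $y_0=p_{g_n}$ and $y_N=x_n$. For each $i$ the triangle inequality gives
$$d(f(y_i),y_{i+1})\leq d(f(y_i),g_n(y_i))+d(g_n(y_i),y_{i+1})<\tfrac{\delta}{3}+\tfrac{\delta}{3}=\tfrac{2\delta}{3},$$
so the sequence is already a $\tfrac{2\delta}{3}$-pseudo-orbit of $f$. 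Prepending $p$ at the start (with $d(f(p),y_0)=d(p,p_{g_n})<\delta/3$) and appending $x_\infty$ at the end (with $d(f(y_{N-1}),x_\infty)\leq d(f(y_{N-1}),y_N)+d(y_N,x_\infty)<\delta$) produces a genuine $\delta$-pseudo-orbit of $f$ from $p$ to $x_\infty$. Since $\delta$ was arbitrary, $p\rightsquigarrow x_\infty$ under $f$. The symmetric construction, using a $g_n$-pseudo-orbit from $x_n$ back to $p_{g_n}$, gives $x_\infty\rightsquigarrow p$ under $f$, so $x_\infty\in C_f(p)$, a contradiction.

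There is no real obstacle here: the proof is a routine upper semi-continuity argument for chain components, the only subtle points being the choice of constants so that the $g_n$-pseudo-orbit converts into an $f$-pseudo-orbit and the appeal to hyperbolic continuation to guarantee $p_{g_n}\to p$. Note that the lemma itself does not require genericity; it holds for every hyperbolic periodic point $p$ of every $f\in\D(M)$, and the implicit statement of the lemma is simply that $p_g$ denotes the unique hyperbolic continuation of $p$ for $g$ in a small $C^1$-neighborhood of $f$.
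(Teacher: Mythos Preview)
Your argument is essentially correct and considerably more self-contained than the paper's proof, which consists solely of a reference to \cite{S1}. The pseudo-orbit transfer you carry out is exactly the standard mechanism behind upper semi-continuity of chain components, so in spirit you are reproducing what that citation encodes.

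Two small points deserve tightening. First, when you ``prepend $p$'' you write $d(f(p),y_0)=d(p,p_{g_n})$, which silently assumes $f(p)=p$; for a genuine periodic point this equality fails. The clean fix is to \emph{replace} the endpoint $y_0=p_{g_n}$ by $p$ (and likewise replace $y_N=x_n$ by $x_\infty$, which is in fact the computation you carry out at the other end, despite calling it ``appending''). The relevant jump $d(f(p),y_1)$ is then bounded via
\[
d(f(p),y_1)\le d(f(p),f(p_{g_n}))+d(f(p_{g_n}),g_n(p_{g_n}))+d(g_n(y_0),y_1),
\]
using uniform continuity of $f$, $d(p,p_{g_n})\to 0$, and $d_{C^0}(f,g_n)\to 0$. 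Second, the appeal to Lemma~\ref{lem1}(b) is unnecessary and slightly at odds with your own closing remark: that lemma is a generic statement, whereas here you only need the \emph{definition} of the chain component---once $x_\infty\leftrightsquigarrow p$ under $f$, one has $x_\infty\in C_f(p)$ directly, with no genericity involved. Dropping that reference makes the argument match your (correct) observation that the lemma holds for every $f$, not just generic ones.
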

\begin{proof} See [\cite{S1}, Lemma].
\end{proof}

Let $H_f(p)$ be the homoclinic class of $p.$ It is known that the
map $f\mapsto H_f(p)$ is lower semi-continuous. Thus by Lemma
\ref{lem1}(b), there is a $C^1$-residual set in ${\rm Diff}(M)$
such that for any $f$ in the set, the map $f\mapsto
H_f(p)(=C_f(p))$ is semi-continuous.

It is known that $C^1$ generically, a homoclinic class
$H_f(p)$ is the chain component $C_f(p).$

\begin{rk}\label{rk3}There is a residual set $\mathcal{G}_4\subset{\rm Diff}(M)$ such that for any $f\in\mathcal{G}_3$,
we have the following property. Let $C_f(p)$ be isolated in
$U.$ Then if $C_f(p)$ is a semi-continuous, then for any
$\epsilon>0$, there is $\delta>0$ such that if $d_1(f,
g)<\delta(g\in {\rm Diff}(M))$ then $C_g(p_g)\subset
B_{\epsilon}(C_f(p)),$ and $C_f(p)\subset B_{\epsilon}(C_g(p_g)),$
where $d_1$ is the $C^1$ metric on $M.$
\end{rk}
 \begin{lem}\label{lem5} There is a residual set
$\mathcal{G}_5\subset {\rm Diff}(M)$ such that if
$f\in\mathcal{G}_5,$ and an isolated homoclinic class $H_f(p)$ is
 shadowable, then there is a $\delta>0$
 such that for any periodic point $q\sim p,$ $q$ has no
 $\delta$-weak eigenvalue.
 \end{lem}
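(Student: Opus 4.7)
The plan is to take $\mathcal{G}_5 = \mathcal{G}_1\cap\mathcal{G}_2\cap\mathcal{G}_3\cap\mathcal{G}_4$ and proceed by contradiction. Fix $f\in\mathcal{G}_5$ with $H_f(p)$ isolated in an open set $U$ and $f|_{H_f(p)}$ shadowable, and suppose no uniform $\delta>0$ works; then there exist $\delta_n\downarrow 0$ and hyperbolic periodic points $q_n\sim p$ such that $q_n$ has a $\delta_n$-weak eigenvalue. By Lemma~\ref{lem3}(d) I may further assume every eigenvalue of each $q_n$ is real.

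First I would invoke Franks' lemma along the orbit of $q_n$: since the weak eigenvalue is real, a $C^1$-small perturbation whose size shrinks with $\delta_n$ moves it to exactly $\pm 1$. A standard center-manifold argument (following \cite{YG}) then yields, for each large $n$, a diffeomorphism $g_n$ arbitrarily $C^1$-close to $f$ that carries an $\eta_n$-simply periodic curve ($\eta_n\to 0$) whose endpoints are hyperbolic periodic points homoclinically related to $p_{g_n}$. Applying Lemma~\ref{lem3}(a) transports this back to $f$: there is a $2\eta_n$-simply periodic curve $\alpha_n$ of $f$ whose endpoints $r_n$ and $s_n$ are homoclinically related to $p$. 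By Remark~\ref{rk1} these endpoints lie in $H_f(p)=C_f(p)$ (the identity coming from Lemma~\ref{lem1}(b)); since $H_f(p)$ is isolated in $U$ and every iterate of $\alpha_n$ has length $<2\eta_n$, one may ensure $\alpha_n\subset U$ and therefore $\alpha_n\subset H_f(p)$ for every sufficiently large $n$.

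The final step is to contradict the shadowing of $f|_{H_f(p)}$ using the arbitrarily small normally hyperbolic periodic arcs $\alpha_n$. The two endpoints $r_n$ and $s_n$ of $\alpha_n$ are distinct hyperbolic periodic points of the same period $\pi(\alpha_n)$, and the length bound in the definition of a simply periodic curve gives
\[ d(f^i(r_n),f^i(s_n))<2\eta_n \quad \text{for every } i\in\Z. \]
I would then either (i) parametrize $\alpha_n$ by $\sigma\colon[0,1]\to\alpha_n$ and construct a $\delta$-pseudo orbit in $H_f(p)$ that drifts along $\sigma$ by taking inter-block jumps of size at most $\eta_n/N<\delta$, and use the center-manifold projection onto $\alpha_n$ to show that no single $f$-orbit can $\epsilon$-shadow it; or (ii) invoke the standard principle that shadowing of an isolated chain transitive set implies expansivity, from which the pair $(r_n,s_n)$ with $d(f^i(r_n),f^i(s_n))\to 0$ uniformly in $i$ yields an immediate violation of expansivity and hence of shadowing.

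The step I expect to be the main obstacle is exactly this last one---turning the normally hyperbolic periodic arc $\alpha_n$ into a genuine failure of shadowing (or expansivity) on $H_f(p)$. The earlier assembly of Franks' lemma, the $C^1$-generic transfer provided by Lemma~\ref{lem3}, and the isolation hypothesis are otherwise routine, so the nontrivial content of the lemma is really the principle that an isolated shadowable homoclinic class cannot contain arbitrarily small nontrivial normally hyperbolic periodic arcs.
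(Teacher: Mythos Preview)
Your outline is essentially the paper's own proof: the paper likewise sets $\mathcal{G}_5=\mathcal{G}_1\cap\mathcal{G}_2\cap\mathcal{G}_3\cap\mathcal{G}_4$, obtains (citing \cite{S}, Theorem~B, where you instead sketch Franks' lemma plus a center-manifold construction) an $\eta$-simply periodic curve for a nearby $g$, transfers it to $f$ via Lemma~\ref{lem3}(a), uses isolation to trap the resulting curve $\mathcal{J}_q$ inside $C_f(p)=\bigcap_{n\in\Z}f^n(U)$, and then asserts that $f|_{\mathcal{J}_q}$ cannot have the shadowing property. For that final step commit to your option~(i)---the drift pseudo-orbit along the arc is the classical obstruction (the period map restricted to the arc is essentially the identity on an interval)---and drop option~(ii): shadowing of an isolated chain-transitive set does \emph{not} imply expansivity as a general principle, so that shortcut is not available.
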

\begin{proof} Let $\mathcal{G}_5=\mathcal{G}_1\cap
\mathcal{G}_2\cap \mathcal{G}_3\cap\mathcal{G}_4,$ and let
$f\in\mathcal{G}_5.$ Assume that $f$ has the shadow property on
$H_f(p).$ We will derive a contradiction. For any $\delta>0$ there
is a hyperbolic periodic point $q\in H_f(p)$ such that $q\sim p$
with $\delta$-weak eigenvalue. From \cite{S}(Theorem B), let
$\eta>0$ be sufficiently small. Then for any $C^1$-neighborhood
$\mathcal{U}(f)$ of $f$, there is $g\in\mathcal{U}(f),$ $g$ has an
$\eta$-simply periodic curve $\mathcal{I}_q,$ whose endpoints are
homoclinically related to $p_g,$ and $\eta$-simply periodic curve
$\mathcal{I}_q,$ is in $C_g(p_g).$
 Then
we know that for some $l>0,$ $\mathcal{I}_q,$ is a $g^l$-invariant
small curve containing $q$(see \cite{S}, $q$ is the center of
$\mathcal{I}_q$), where $q\sim p_g.$

 By Lemma \ref{lem3}(a), for given $\eta>0$ $f$ has a $2\eta$-simply
periodic curve $\mathcal{J}_q$ such that the endpoints of
$\mathcal{J}_q$ are homoclinically related to $p,$ and
$\mathcal{J}_q$ contains $q\sim p.$ By Remark \ref{rk3} and
$C_f(p)$ is isolated in $U,$ we know that $\mathcal{J}_q\subset
U.$ Then for some $l'>0,$ $\mathcal{J}_q$ is a $f^{l'}$-invariant
small curve center at $q$. To simplify, we denote $f^{l'}$ by $f$.
Since $C_f(p)$ is an isolated,
$$\mathcal{J}_q\subset
C_f(p)=\bigcap_{n\in\Z}f^n(U).$$ Since $f$ has the shadowing
property on $C_f(p)$, $f$ must have the shadowing property on
$\mathcal{J}_q$. But it is contradiction. Thus there is a
$C^1$-neighborhood $\mathcal{U}(f)$ of $f$ such that for any
$g\in\mathcal{U}(f)$ any periodic point $q\sim p_g$ has no
$\delta/2$-weak eigenvalue.
\end{proof}

 Let $p$ be a hyperbolic periodic point $f\in{\rm Diff}(M).$
\begin{rk}\label{rk4}\cite{YG}, There is a residual set $\mathcal{G}_6\subset{\rm Diff}(M)$ such that
for any $f\in\mathcal{G}_6,$ and any $\delta>0$ if every periodic
point $q\sim_f p$ has no $2\delta$-weak eigenvalue, then there is
a $C^1$ neighborhood $\mathcal{U}(f)$ of $f$ such that for any
$g\in\mathcal{U}(f)$ any periodic point $q\sim p_g$ has no
$\delta$-weak eigenvalue.
\end{rk}
 {\bf  Proof of Proposition \ref{pro1}.} Let
$f\in\mathcal{G}_5\cap\mathcal{G}_6.$ Assume that $f$ has the
shadowing property on $H_f(p).$ By Lemma \ref{lem3}, and Remark
\ref{rk4}, there is a $C^1$ neighborhood $\mathcal{U}(f)$ of $f$
such that for any $g\in\mathcal{U}(f)$ any periodic point $q\sim
p_g$ has no $\delta/2$ weak eigenvalue.

Thus from the extension of Franks' lemma (\cite{G}) and
Ma\~n\'e(\cite{M}), any small perturbation of the derivative
along a periodic orbit,
there exists a small perturbation of the underlying diffeomorphism
which preserves the homoclinic relation simultaneously. Since
there is a $C^1$ neighborhood $\mathcal{U}(f)$ of $f$ such that
for any $g\in\mathcal{U}(f)$, any periodic point $q\sim p_g$ has
no $\delta/2$ weak eigenvalue. Therefore, from the extension of
Franks' lemma
$$\{D_q f, D_{f(q)}f, \ldots, D_{f^{\pi(q)-1}(q)}f: q\sim p\}$$ is an
uniformly hyperbolic family of periodic sequences of isomorphisms
of $\R^{{\rm dim}M}.$ Thus we get Proposition \ref{pro1}.

\bigskip

 {\bf  Proof of Theorem \ref{thm0}.} Let $f\in\mathcal{G}_5\cap\mathcal{G}_6,$ and let $C_f(p)$ be an
 isolated in $U.$ Assume that $f$ has the shadowing property on
 $C_f(p).$ Then $C_f(p)$ satisfies Proposition \ref{pro1}. Since $f$
 has the shadowing property on $C_f(p)$, by \cite{WGW},  $C_f(p)$
 is hyperbolic. Thus $C^1$-generically, $H_f(p)$ is hyperbolic
 basic set.
\bigskip

\end{document}